\newtheorem{thm}{Theorem}
\newtheorem{cor}[thm]{Corollary}
\newtheorem{lem}[thm]{Lemma}
\DeclareMathOperator{\Vol}{Vol}
\DeclareMathOperator{\const}{const}
\begin{document}
\baselineskip=17pt
\title{\bf Bernstein type theorem for entire weighted minimal graphs in $\Bbb G^n\times\Bbb R.$\\
 \small{}}
\author{\bf  Doan The Hieu\\
 Departement of Mathematics\\
 College of Education, Hue University, Hue, Vietnam\\
 dthehieu@yahoo.com\\
{\bf Tran Le Nam} \\
 Departement of Mathematics\\
 Dong Thap University, Dong Thap, Vietnam\\
 lenamdongthapmuoi@gmail.com}
 \maketitle
\begin{abstract}
Based on a calibration argument, we prove a Bernstein type theorem for entire minimal graphs over  Gauss space $\Bbb G^n$ by a  simple proof.
 \end{abstract}
\noindent {\bf AMS Subject Classification (2000):}
 {Primary 53C25; Secondary 53A10; 49Q05 }\\
{\bf Keywords:} {Gauss space, weighted minimal graphs, Bernstein theorem} \vskip 1cm
\section{Introduction}
The classical Bernstein theorem asserts that an entire minimal graph over $\Bbb R^2$ is a plane (see \cite{ber}, \cite {hei}, \cite {oss1}). The theorem has been generalized to dimensions $n\le 7$ (see \cite{gio1}, \cite{alm}, \cite{sim}) and has been proved to be false in dimensions $n\ge 8$ (see \cite{bom}). Entire minimal graphs over $\Bbb R^n,\ n\ge 8,$ that are not hyperplanes, were found by Bombieri, De Giorgi and Guisti in \cite {bom}.
For arbitrary dimensions, the theorem is settled under some hypotheses on the growth of the minimal graph.  The theorem is also studied in product spaces $M^n\times \Bbb R$ (see \cite{esp}), where $M$ is a Riemannian manifold,  as well as for self-similar shrinkers. Ecker and Huisken \cite{eck} proved a Bernstein type theorem for self-similar shrinkers that are entire graph and have at most polynomial volume growth. Later the condition on volume growth is removed by Wang \cite{wan}.

Self-similar shrinkers, a simplest solution to mean curvature flow that satisfies the following equation
$$H=\frac12\langle \vec{x},{\bf n}\rangle,$$
are just minimal hypersurfaces in $\Bbb R^{n+1}$ under the conformally changed metric $g_{ij}=\exp(-|\overrightarrow{x}|/2n)\delta_{ij}$
(see \cite{CM1}) or a special example of weighted minimal hypersurfaces in $\Bbb R^{n+1}$ with density $e^{-\frac{|x|^2}4},$ a modified version of Gauss space.
Gauss space $\Bbb G^{n+1},$ Euclidean space $\Bbb R^{n+1}$ with Gaussian probability density $e^{-f}=(2\pi)^{-\frac {n+1}2}e^{-\frac{|x|^2}2},$ is a typical example of a manifold with density and very interesting to probabilists.

It should be mentioned that the Bernstein type theorem for self-similar shrinkers can be applied for $f$-minimal graphs in $\Bbb G^{n+1}=\Bbb G^n\times\Bbb G^1.$

Motivated by these works, we study a Bernstein type theorem for entire weighted minimal graphs in the product space $\Bbb G^n\times \Bbb R,$ that is $\Bbb R^{n+1}$ with mixed Gaussian-Euclidean density.

Based on a calibration argument, we prove a volume growth estimate of a weighted minimal graph in the product space $\Bbb G^n\times \Bbb R$ and the  theorem follows easily.
 Our proof is quite simple, elementary and holds for arbitrary dimensions without any additional conditions. The proof is also adapted for the case in which Gauss space is replaced by $\Bbb R^n$ with a radial density and finite weighted volume.


\section{Weighted minimal surfaces in $\Bbb G^n\times\Bbb R$}

A manifold with density, also called a weighted manifold, is a Riemannian manifold with a positive function $e^{-f}$ used to weight both volume and perimeter area. The weighted mean curvature of hypersufaces on a such manifold
 is defined as follows
\begin{equation}  \label {Hf}
    H_{f}=H+\langle \nabla f,{\bf n}\rangle,\end{equation}
where $H$ is the Euclidean mean curvature and ${\bf n}$ is the
normal vector field of the hypersurface.  A hypersurface with $H_f=0$ is called a weighted minimal hypersurface or an $f$-minimal hypersurface. For more details about manifolds with density, we refer the reader to \cite{mo1}, \cite{mo2}, \cite{mo5}.

Let $S$ be a regular hypersurface in $\Bbb G^{n}\times \Bbb R,$ \ $M$ be a point on $S$ and $\rho$ denote the projection onto $x_{n+1}$-axis. The following lemma shows a geometric meaning of $\langle \nabla f,{\bf n}\rangle.$

\begin{lem}\label{21}
$$|\langle\nabla {\varphi},{\bf n}\rangle|=d(\rho(M), T_MS).$$
\end{lem}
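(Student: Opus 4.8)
The plan is to reduce the identity to the elementary formula for the distance from a point to an affine hyperplane.

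First I would fix coordinates on $\Bbb G^n\times\Bbb R$, writing a point as $x=(x',x_{n+1})$ with $x'=(x_1,\dots,x_n)$, so that the mixed Gaussian--Euclidean weight is $e^{-\varphi}$ with $\varphi(x)=\tfrac12|x'|^2$ (up to an irrelevant additive constant coming from the normalization of the Gaussian; thus $\varphi$ plays the role of $f$ in \eqref{Hf}). Then $\nabla\varphi(x)=(x_1,\dots,x_n,0)=(x',0)$. Since $\rho$ is the orthogonal projection onto the $x_{n+1}$-axis, $\rho(M)=(0,\dots,0,x_{n+1})$, and therefore $\nabla\varphi(M)=\overrightarrow{\rho(M)M}$: the gradient of $\varphi$ at $M$ is precisely the vector from $\rho(M)$ to $M$, which is orthogonal to the $x_{n+1}$-axis.

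Next I would recall that for an affine hyperplane $\Pi$ through a point $M$ with unit normal ${\bf n}$, and for any point $P$, one has $d(P,\Pi)=|\langle\overrightarrow{MP},{\bf n}\rangle|$. Applying this with $\Pi=T_MS$, the affine tangent hyperplane of $S$ at $M$, and $P=\rho(M)$ gives
$$d(\rho(M),T_MS)=|\langle\overrightarrow{M\rho(M)},{\bf n}\rangle|=|\langle-\nabla\varphi(M),{\bf n}\rangle|=|\langle\nabla\varphi(M),{\bf n}\rangle|,$$
which is the assertion.

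I do not expect a genuine obstacle: the whole content is the identification of $\nabla\varphi$ with the horizontal position vector $\overrightarrow{\rho(M)M}$, and the only things to watch are the orientation/sign convention (which disappears upon taking absolute values) and the fact that $T_MS$ is the \emph{affine} tangent plane through $M$, not the linear tangent space through the origin. The same computation goes through when $\Bbb G^n$ is replaced by $\Bbb R^n$ with a radial weight $e^{-\varphi(|x'|)}$, except that there $\nabla\varphi(M)=\dfrac{\varphi'(|x'|)}{|x'|}\,\overrightarrow{\rho(M)M}$ is only parallel to $\overrightarrow{\rho(M)M}$; for the Gaussian weight $\varphi'(r)=r$, so the proportionality factor is $1$ and equality holds on the nose.
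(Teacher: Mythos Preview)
Your argument is correct and is essentially the same as the paper's: both reduce to the point--hyperplane distance formula together with the observation that $\nabla\varphi(M)=(x_1,\dots,x_n,0)$, which is exactly the vector $\overrightarrow{\rho(M)M}$. The paper writes the affine equation $\sum a_ix_i+d=0$ of $T_MS$ explicitly and computes $d(\rho(M),T_MS)=|a_{n+1}x_{n+1}+d|=|\sum_{i=1}^n a_ix_i|$, whereas you package the same step as $d(P,\Pi)=|\langle\overrightarrow{MP},{\bf n}\rangle|$; the content is identical.
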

\begin{proof}
Let ${\bf n}(a_1,a_2,\ldots, a_{n+1})$ be a normal vector of $S$ at $M.$ Then an equation of $T_MS$ is
$$\sum_{i=1}^{n+1} a_ix_i+d=0.$$
Therefore,
$$d(\rho(M),T_MS)=|a_{n+1}x_{n+1}+d|=|\langle(x_1, x_2,\ldots,x_n, 0),(a_1,a_2,\ldots,a_n,a_{n+1})\rangle|=|\langle\nabla {f},{\bf n}\rangle|.$$
\end{proof}

Below are simple examples of constant weighted mean curvatures and weighted minimal surfaces in $\mathbb{G}^2\times \mathbb{R}.$
\begin{enumerate}
\item
 Planes parallel to the $z$-axis have constant weighted mean curvature and planes containing the $z$-axis are weighted minimal.
 \item Planes $z=a$ are weighted minimal.
\item Right circular cylinders about $z$-axis have  constant weighted mean curvature and the one of radius 1 is weighted minimal.
\item Consider the deformation determined by a family of parametric minimal surfaces given by
\begin{align}X_\theta(u,v)&=(x(u,v), y(u,v), z(u,v));\\
x(u,v)&=\cos\theta\sinh v\sin u+\sin\theta\cosh v\cos u,\\
y(u,v)&=-\cos\theta\sinh v\cos u+\sin\theta\cosh v\sin u,\\
z(u,v)&=u\cos\theta+v\sin\theta;
\end{align}
where $-\pi<u\le\pi,\ -\infty<v<\infty$ and the defomation parameter $-\pi<\theta\le\pi.$
A direct computation shows that $X_\theta$ is minimal with the normal  vector field
$$N=\left(\frac{\cos u}{\cosh v},  \frac{\sin u}{\cosh v}, -\frac{\sinh u}{\cosh v}\right).$$
Since $\nabla f=(x, y, 0),$\ $\langle\nabla f, N\rangle=\sin\theta,$\ $X_{\pi/2}$ is the catenoid while $X_0$ is the helicoid, it follows that
\begin{enumerate}
\item In $\Bbb G^2\times \Bbb R,$\ $X_\theta$ has constant weighted curvature.
\item The helicoid $X_0$ is a ruled weighted minimal  surface in $\Bbb G^2\times \Bbb R.$
\item The catenoid $X_{\pi/2}$ has constant weighted  curvature 1 in $\Bbb G^2\times \Bbb R.$
    \end{enumerate}    \end{enumerate}
\section{A Bernstein type theorem for weighted minimal graphs in $\Bbb G^n\times\Bbb R$}

\subsection{Minimality of hyperplanes in  $\Bbb G^n\times(\Bbb R, e^{-h})$}

Consider the product space $\Bbb G^n\times(\Bbb R, e^{-h})$ with product density $e^{-(f+h)}.$
 A point in $\Bbb G^n\times(\Bbb R, e^{-h})$ can be written as $({\bf x},x_{n+1}),$ where ${\bf x}=(x_1, x_2,\ldots, x_n)\in \Bbb R^n.$ An equation of a non-vertical hyperplane in $\Bbb G^n\times(\Bbb R, e^{-h})=\Bbb R^{n+1}$ is of the form
 $$\Sigma_{i=1}^na_ix_i+x_{n+1}+c=0, \ \ \ c, a_1, a_2, \ldots, a_n\in\Bbb R.$$
 A direct computation shows that $\langle \nabla(f+h),{\bf n}\rangle=0$ if and only if
 $$ \Sigma_{i=1}^na_ix_i+h'(x_{n+1})=0.$$
Thus,
 \begin{enumerate}
 \item the plane is weighted minimal if and only if $h'(-c)=0,$
 \item the plane is weighted minimal and non-horizontal if and only if $h(x_{n+1})=x_{n+1}^2/2+cx_{n+1}+b,$ where $b\in \Bbb R$ is a constant.
 \end{enumerate}
If $h$ is monotone, any hyperplane is not weighted minimal and a  Bernstein type theorem does not exist for this case. Since weighted minimality of a hypersurface in $\Bbb G^{n+1}$ is equivalent to that in  $\Bbb G^n\times(\Bbb R, e^{-h}),$ where $h(x_{n+1})=x_{n+1}^2/2+cx_{n+1}+b,$ the Bernstein type theorem   proved by Ecker-Huisken and Wang (\cite{eck},  \cite{wan}) for self-similar shrinkers is adapted for weighted minimal graphs in the later case. The following example shows that in $\Bbb G^n\times(\Bbb R_+, e^{-h}),$  where $\Bbb R_+=\{x\in\Bbb R: x\ge 0\}$ and  $h(z)=z^2-\ln{\sqrt{1+4z}},$ there exist both hyperplanar and non-hyperplanar entire weighted minimal graphs.

Consider the graph of the function $z=u(x,y)=x^2$ over $\Bbb G^2$ in $\Bbb G^2\times(\Bbb R, e^{-h}).$  A direct computation yields
$$H_{(f+g)}=\frac{1}{(1+4z)^{3/2}}-\frac{2z+h'(z)}{2\sqrt{1+4z}}=0.$$
Moreover, the horizontal planes $z=(1+\sqrt{17})/8$ are also weighted minimal.

 \subsection{Entire weighted minimal graph in $\Bbb G^n\times\Bbb R$}
This subsection considers the case where $h=\const.$ and we can assume that $h=1.$
The space $\Bbb G^n\times\Bbb R$ is just $\Bbb R^{n+1}=\Bbb R^n\times\Bbb R$ endowed with the Euclidean-Gaussian density
$$e^{-f}=(2\pi)^{-\frac n2}e^{-\frac{|{\bf x}|^2}2}.$$
Denote by:
\begin{itemize}
       \item  $B^{n+1}(p,R)$ the $(n+1)$-ball in $\Bbb G^n\times \Bbb R$ with center $p$ and radius $R,$
       \item $B^{n}(p,R)$ the $n$-ball in $\Bbb G^n$ with center $p$ and radius $R,$
       \item $S^{n+1}(p,R)$ the $(n+1)$-sphere in $\Bbb G^n\times \Bbb R$ with center $p$ and radius $R,$
       \item $S^{n}(p,R)$  the $n$-sphere in $\Bbb G^n$ with center $p$ and radius $R,$
       \item $S^{n}_+(p,R)$  the upper half of $S^{n}(p,R),$
            \item $B^{n+1}_{+}(p,R)$  the upper half of $B^{n+1}(p,R),$
     \end{itemize}
Let $\Sigma$ be the weighted minimal graph of a function $u(x_1, x_2, \ldots, x_n)=x_{n+1}$ over $\Bbb G^n$ and let $p$ be the intersection point of $\Sigma$ and $x_{n+1}$-axis, then we have the following area estimates.
\begin{lem}
\begin{equation}\label{eq2}\Vol_f(\Sigma\cap B^{n+1}(p,R))\le \Vol_f(B^n(O, R))+ne^{-R^2}C_nR^{n-1},\end{equation}
where $C_n=\Vol B^n(O,1).$
\end{lem}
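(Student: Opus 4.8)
The plan is to compare $\Sigma_R:=\Sigma\cap B^{n+1}(p,R)$ with an explicit competitor having the same boundary, using that an entire $f$‑minimal graph minimizes weighted area. Write $\pi\colon\Bbb R^{n+1}\to\Bbb G^n$, $(\mathbf x,x_{n+1})\mapsto\mathbf x$, and $W:=\sqrt{1+|\nabla u|^2}$, so the graph map $\mathbf x\mapsto(\mathbf x,u(\mathbf x))$ inverts $\pi|_\Sigma$ and the weighted area element of $\Sigma$ is $e^{-f}W\,d\mathbf x$. The calibration point: with $N=(-\nabla u,1)/W$ extended to be constant along vertical lines, the $n$‑form $\omega_f:=e^{-f}\,\iota_N(dx_1\wedge\dots\wedge dx_{n+1})$ is closed precisely because $H_f=0$, has comass $1$ for the weighted metric, and restricts to the weighted area form on $\Sigma$; hence $\Vol_f(\Sigma_R)\le\Vol_f(T)$ for every integral current $T$ with $\partial T=\partial\Sigma_R$ (in $\Bbb R^{n+1}$ such $T$ is automatically homologous to $\Sigma_R$ rel boundary, so Stokes applies). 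Set $D:=\pi(\Sigma_R)$. Since $(\mathbf x,u(\mathbf x))\in B^{n+1}(p,R)$ forces $|\mathbf x|^2\le|\mathbf x|^2+(u(\mathbf x)-p_{n+1})^2\le R^2$, we have $D\subseteq\overline{B^n(O,R)}$, while $\partial\Sigma_R=\{(\mathbf x,u(\mathbf x)):\mathbf x\in\partial D\}$ and $|u(\mathbf x)-p_{n+1}|=\sqrt{R^2-|\mathbf x|^2}$ on $\partial D$. As competitor take $T:=(D\times\{p_{n+1}\})\cup C$, where $C$ is the vertical cylinder $\{(\mathbf x,t):\mathbf x\in\partial D,\ t\ \text{between}\ p_{n+1}\ \text{and}\ u(\mathbf x)\}$; then $T\subseteq\overline{B^{n+1}(p,R)}$ and $\partial T=\partial\Sigma_R$ because the two copies of $\partial D\times\{p_{n+1}\}$ cancel.

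The horizontal disc contributes $\Vol_f(D\times\{p_{n+1}\})=\Vol_f(D)\le\Vol_f(B^n(O,R))$, and parametrising $C$ by $\mathbf x\in\partial D$ and the vertical segment gives
\[
\Vol_f(C)=\int_{\partial D}e^{-f(\mathbf x)}\,|u(\mathbf x)-p_{n+1}|\,d\sigma(\mathbf x)=\int_{\partial D}\sqrt{R^2-|\mathbf x|^2}\,e^{-f(\mathbf x)}\,d\sigma(\mathbf x),
\]
where $d\sigma$ denotes the $(n-1)$‑dimensional area element on $\partial D$. By the minimizing property,
\[
\Vol_f(\Sigma_R)\le\Vol_f(T)\le\Vol_f(B^n(O,R))+\int_{\partial D}\sqrt{R^2-|\mathbf x|^2}\,e^{-f(\mathbf x)}\,d\sigma(\mathbf x).
\]
The same inequality can be reached without naming a competitor: from $W=\tfrac1W+\tfrac{|\nabla u|^2}{W}$ one has $\Vol_f(\Sigma_R)=\int_D\tfrac{e^{-f}}{W}\,d\mathbf x+\int_D\langle\nabla(u-p_{n+1}),\tfrac{e^{-f}\nabla u}{W}\rangle\,d\mathbf x$; the first term is $\le\Vol_f(D)$, and integrating the second by parts the interior term vanishes because the $f$‑minimal graph equation $H_f=0$ is exactly $\divv\!\big(e^{-f}\nabla u/W\big)=0$, leaving $\int_{\partial D}(u-p_{n+1})\,e^{-f}\langle\nabla u,\nu\rangle/W\,d\sigma$, whose modulus is $\le\int_{\partial D}\sqrt{R^2-|\mathbf x|^2}\,e^{-f}\,d\sigma$ since $|\nabla u|/W<1$.

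Thus everything reduces to the single estimate
\[
\int_{\partial D}\sqrt{R^2-|\mathbf x|^2}\,e^{-f(\mathbf x)}\,d\sigma(\mathbf x)\ \le\ n\,e^{-R^2}C_nR^{n-1},
\]
and this is the only genuine point; the reduction above is formal. Here one has to exploit that $\partial D$ is the level set $\{\,|\mathbf x|^2+(u(\mathbf x)-u(O))^2=R^2\,\}$, which is contained in $\overline{B^n(O,R)}$ and along which the vertical defect $\sqrt{R^2-|\mathbf x|^2}$ coincides with $|u(\mathbf x)-u(O)|$, and combine this with the rapid decay of the Gaussian weight $e^{-f}$; the polynomial factor $nC_nR^{n-1}=\Vol\,S^{n-1}(O,R)$ and the exponential $e^{-R^2}$ should come out of comparing the weighted size of $\partial D$ (and of the integrand on it) with the Euclidean sphere of radius $R$ in $\Bbb G^n$. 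I expect this quantitative step — turning the soft statement ``$C$ is small because $\partial D$ is a hypersurface trapped in a ball where the density is exponentially tiny'' into the stated bound — to be the main obstacle. Once it is in hand, letting $R\to\infty$ makes the error term vanish, $\Vol_f(B^n(O,R))\to\Vol_f(\Bbb G^n)=1$, and $D$ exhausts $\Bbb G^n$, so $\Vol_f(\Sigma)=\int_{\Bbb G^n}e^{-f}W\le 1$; since $W\ge1$ this forces $\nabla u\equiv0$, which is the Bernstein conclusion.
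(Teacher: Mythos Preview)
Your reduction to an area-minimizing comparison is exactly the paper's starting point, and your calibration setup is correct. The gap is in your choice of competitor. By taking $T=(D\times\{p_{n+1}\})\cup C$ you make the error term $\Vol_f(C)=\int_{\partial D}\sqrt{R^2-|\mathbf x|^2}\,e^{-f}\,d\sigma$ depend on the unknown hypersurface $\partial D=\{\,|\mathbf x|^2+(u(\mathbf x)-u(O))^2=R^2\,\}$, whose $(n-1)$-dimensional measure you have no a~priori control over: nothing prevents $\partial D$ from crossing regions where $|\mathbf x|$ is small (hence $e^{-f}$ is of order~$1$) while having arbitrarily large area. The inequality you isolate as ``the main obstacle'' is therefore not merely hard but, as stated, unprovable from the information available --- you would need exactly the kind of growth control on $u$ that the Bernstein theorem is supposed to deliver. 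The same objection applies to your integration-by-parts variant, since the boundary integral is again over $\partial D$.

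The paper sidesteps this by choosing a competitor that is \emph{independent of $u$}. Since $\partial\Sigma_R$ lies on the sphere $\partial B^{n+1}(p,R)$ and divides it into two caps, one cap has weighted area at most $\tfrac12\Vol_f\bigl(\partial B^{n+1}(p,R)\bigr)$; using that cap as competitor gives $\Vol_f(\Sigma_R)\le \Vol_f\bigl(S^n_+(O,R)\bigr)$ (the density is independent of $x_{n+1}$, so one may translate $p$ to $O$). The right-hand side is now an explicit quantity, and the paper estimates it by Stokes' theorem: $S^n_+(O,R)$ and the equatorial disc $\overline{B^n(O,R)}$ together bound the half-ball $B^{n+1}_+(O,R)$, so their weighted areas differ by $\int_{B^{n+1}_+}d(e^{-f}\eta)$; enlarging to the solid cylinder $\overline{B^n(O,R)}\times[0,R]$ and applying Stokes once more pushes this onto the lateral piece $S^{n-1}(O,R)\times[0,R]$, where $|\mathbf x|\equiv R$ and the Gaussian weight is uniformly exponentially small. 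That is what produces the polynomial-times-Gaussian error term. The moral: compare $\Sigma_R$ with half of the ambient sphere, not with a disc-plus-cylinder built over the $u$-dependent set $\partial D$.
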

\begin{proof}

Let ${\bf n}$ be a unit normal field of $\Sigma$ and consider the smooth extension  of ${\bf n}$ by the translation along $x_{n+1}$-axis, also denoted by ${\bf n}.$

Consider the $n$-differential form defined by
\begin{equation}\label{eq1} w(X_1, X_2,\ldots, X_{n})= \det(X_1, X_2,\ldots, X_{n}, {\bf n}),\end{equation}
 where $X_i,\ i=1,2,\ldots, n$ are smooth vector fields.
 It is not hard to see that:
 \begin{enumerate}
   \item  $|w(X_1, X_2,\ldots, X_{n})|\le 1,$ for every normal vector fields  $X_i,\ i=1,2,\ldots, n$ and the equality holds if and only if
       $X_1, X_2,\ldots, X_{n}$ are tangent to $\Sigma.$
   \item $d(e^{-f}w)=0,$ because $\Sigma$ is weighted minimal.
 \end{enumerate}
Such a differential form is called a weighted calibration that calibrates $\Sigma.$ A simple proof by using Stokes' Theorem proves that $\Sigma$ is weighted area-minimizing, i.e. any compact portion of $\Sigma$ has least area among all surfaces in its homology class (see \cite{hi}).

Now let $\Sigma\cap B^{n+1}(p,R):=\widetilde{\Sigma_R}.$  Since $\widetilde{\Sigma_R}$ is weighted area-minimizing and $\partial\widetilde{\Sigma_R}\subset S^n(p,R),$ we have the following estimate
$$\Vol_f(\widetilde{\Sigma_R})\le\frac 12\Vol_f S^n(p,R).$$
Note that the density does not dependent on the last coordinate, therefore
$$\frac 12\Vol_f S^n(p,R)=\frac 12\Vol_f S^n(O,R)=\Vol_f S_+^n(O,R).$$

Let $\eta$ be the volume form of $\overline{S_+^n(O,R)}$ associated with the outward unit normal defined as in (\ref{eq1}). Its extension  by the translation along $x_{n+1}$-axis in the cylinder $\overline{S^n(O,R)}\times \Bbb R$ is also denoted by ${\eta}.$
By  applying Stokes' theorem together by choosing suitable orientations for objects, we have
$$\begin{aligned}
\Vol_f S_+^n(O,R)&=\int_{S_+^n(O,R)}e^{-f} \eta\\
&=\int_{\overline{B^n}(O,R)}e^{-f} \eta+\int_{\overline{B^{n+1}_{+}(O,R)}} d(e^{-f}\eta) \hskip2.5cm (\text{by Stokes' Theorem})\\
&\le \Vol_f(B^n(O,R))+\int_{\overline{B^n(O,R)}\times [0,R]} d(e^{-f}\eta) \hskip1.2cm (|\eta|\le 1 \ \text{and}\ \ B^{n+1}_{1/2}(O,R)\subset {\cal C}) \\
&= \Vol_f(B^n(O,R))+\int_{S^n(O,R)\times[0,R]}e^{-f}\eta   \hskip2cm (\text{by Stokes' Theorem})\\
&\le \Vol_f(B^n(O,R))+e^{-R^2}\Vol(S^{n-1}(O,R)\times[0,R])\\
&=\Vol_f(B^n(O,R))+ne^{-R^2}C_nR^{n-1}.
\end{aligned}$$
\end{proof}
\begin{cor}
$$\Vol_f(\Sigma)\le 1.$$
\end{cor}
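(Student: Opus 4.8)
The plan is simply to let $R\to\infty$ in the inequality (\ref{eq2}) of the preceding Lemma. First I would observe that as $R$ grows the balls $B^{n+1}(p,R)$ exhaust $\Bbb G^n\times\Bbb R$, so the portions $\widetilde{\Sigma_R}=\Sigma\cap B^{n+1}(p,R)$ increase to all of $\Sigma$. Applying the monotone convergence theorem to the weighted area measure $e^{-f}\,d\mathcal H^n$ on $\Sigma$ then gives $\Vol_f(\widetilde{\Sigma_R})\to\Vol_f(\Sigma)$ as $R\to\infty$. Since $\Sigma$ is an \emph{entire} graph, every point of it lies in some ball centered at $p$, so the exhaustion $\bigcup_{R>0}\widetilde{\Sigma_R}=\Sigma$ is immediate and this passage to the limit needs no extra hypothesis.

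Next I would compute the limits of the two terms on the right-hand side of (\ref{eq2}). For the first term, the density on $\Bbb G^n$ is the standard Gaussian \emph{probability} density $(2\pi)^{-n/2}e^{-|{\bf x}|^2/2}$, whose integral over all of $\Bbb R^n$ equals $1$; hence
$$\Vol_f(B^n(O,R))=(2\pi)^{-\frac n2}\int_{B^n(O,R)}e^{-\frac{|{\bf x}|^2}2}\,d{\bf x}\ \nearrow\ 1\qquad (R\to\infty).$$
For the second term, $C_n=\Vol B^n(O,1)$ is a fixed constant independent of $R$, and $ne^{-R^2}C_nR^{n-1}\to 0$ because the Gaussian factor $e^{-R^2}$ decays faster than the polynomial factor $R^{n-1}$ grows.

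Combining these three limits in (\ref{eq2}) yields $\Vol_f(\Sigma)\le 1+0=1$, which is the assertion. I do not expect any genuine obstacle here: the corollary is a direct asymptotic consequence of the Lemma, and the only step deserving a word of care is the left-hand limit, which is handled by the exhaustion argument and monotone convergence as above.
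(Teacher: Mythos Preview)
Your proof is correct and follows exactly the same approach as the paper, which simply says to take the limit of both sides of (\ref{eq2}) as $R\to\infty$. You have merely supplied the routine details (monotone convergence on the left, the Gaussian normalization and polynomial-versus-exponential decay on the right) that the paper leaves implicit.
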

\begin{proof}
Taking the limit of both side of (\ref{eq2}) as $R$ goes to infinity.
\end{proof}

\begin{thm}
The graph $\Sigma$ of a function $u(x_1, x_2, \ldots, x_n)=x_{n+1}$ over $\Bbb G^n$ is weighted minimal if and only if it is a hyperplane $\{x_{n+1}=a\},$ i.e. $u$ is constant.
\end{thm}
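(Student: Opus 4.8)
The ``if'' direction is immediate: a horizontal hyperplane $\{x_{n+1}=a\}$ has vanishing Euclidean mean curvature and vertical unit normal ${\bf n}=(0,\dots,0,\pm1)$, so $\langle\nabla f,{\bf n}\rangle=0$ (here $\nabla f=({\bf x},0)$), hence $H_f=0$; this is exactly example (2) of Section~2. The whole content is therefore the ``only if'' direction, and the plan is to squeeze $\Vol_f(\Sigma)$ between two bounds: the upper bound $\Vol_f(\Sigma)\le 1$ furnished by the Corollary, and a trivial lower bound coming from the graph structure, with equality forcing flatness.

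First I would write $\Vol_f(\Sigma)$ as a weighted graph integral. Since $\Sigma$ is the graph of $u\colon\Bbb R^n\to\Bbb R$ and the density $e^{-f}=(2\pi)^{-n/2}e^{-|{\bf x}|^2/2}$ depends only on ${\bf x}=(x_1,\dots,x_n)$, the induced weighted area element on $\Sigma$ is $\sqrt{1+|\nabla u({\bf x})|^2}\,e^{-f({\bf x})}\,d{\bf x}$, so that
$$\Vol_f(\Sigma)=\int_{\Bbb R^n}\sqrt{1+|\nabla u|^2}\;e^{-f}\,d{\bf x}.$$
The elementary pointwise inequality $\sqrt{1+|\nabla u({\bf x})|^2}\ge 1$, with equality precisely when $\nabla u({\bf x})=0$, then gives
$$\Vol_f(\Sigma)\ \ge\ \int_{\Bbb R^n}e^{-f}\,d{\bf x}\ =\ \Vol_f(\Bbb G^n)\ =\ 1,$$
the last equality because $e^{-f}$ is the Gaussian probability density on $\Bbb R^n$. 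Combining with the Corollary yields $\Vol_f(\Sigma)=1$, whence $\int_{\Bbb R^n}\bigl(\sqrt{1+|\nabla u|^2}-1\bigr)e^{-f}\,d{\bf x}=0$. As the integrand is continuous and nonnegative and $e^{-f}>0$ everywhere, we conclude $\sqrt{1+|\nabla u|^2}\equiv1$, i.e. $\nabla u\equiv0$; thus $u\equiv a$ is constant and $\Sigma=\{x_{n+1}=a\}$.

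All the genuine work has already been done upstream, in the area estimate Lemma and its Corollary (the weighted calibration plus Stokes' theorem); granting those, the theorem reduces to the observation that over a base of finite weighted mass the only graph that does not strictly increase the weighted area is the flat one. The points I would take care to state explicitly, though they are routine, are: that $\Sigma$ does meet the $x_{n+1}$-axis (at $p=(0,\dots,0,u(0))$), so the Corollary applies; that the graph area formula remains valid with the base density $e^{-f}$ pulled back to $\Sigma$, since the density is independent of the last coordinate; and that $\Sigma\cap B^{n+1}(p,R)$ exhausts $\Sigma$ as $R\to\infty$, so that passing to the limit in the Lemma's estimate is legitimate. None of these is an obstacle, but they are where a careful write-up should pause.
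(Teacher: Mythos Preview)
Your proof is correct and follows exactly the paper's approach: invoke the Corollary $\Vol_f(\Sigma)\le 1$, write $\Vol_f(\Sigma)=\int_{\Bbb G^n}e^{-f}\sqrt{1+|\nabla u|^2}\,dV\ge\int_{\Bbb G^n}e^{-f}\,dV=1$, and conclude $\nabla u\equiv 0$ from the equality case. The only difference is that you spell out the justifications (graph area formula, exhaustion, continuity) that the paper leaves implicit.
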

\begin{proof}
Of course, a horizontal hyperplane in $\Bbb G^n\times \Bbb R$ is weighted minimal.
Now suppose that $\Sigma$ is weighted minimal.
Let $dV=dx_1\wedge dx_2\wedge\ldots\wedge dx_n.$ We have
$$1\ge\Vol_f(\Sigma)=\int_{\Bbb G^n}e^{-f}\sqrt{1+|\nabla u|^2}dV\ge\int_{\Bbb G^n}e^{-f}dV=\Vol_f(\Bbb G^n)=1.$$
The equality holds if and only if $|\nabla u|^2=0,$ i.e. $u$ is constant.
\end{proof}



\begin{thebibliography}{99}
\bibitem {alm} F. J. Almgren, {\sl Some interior regularity theorems for minimal surfaces
and an extension of Bernstein's Theorem}, Ann. of Math. 84 (1966), 277-292.

\bibitem {bom} E. Bombieri, E. De Giorgi, E. Giusti, {\sl Minimal cones and the Bernstein
problem}, Inv. Math. 7 (1969), 243-268.
\bibitem {CM1} T. H. Colding, W. P. Minicozzi, II; Generic mean curvature flow I: generic singularities. Ann. of Math. (2) 175 (2012), no. 2, 755-833.
\bibitem {ber} S. N. Bernstein,  {\sl Sur un th\'eor\`eme de g\'eom\'etrie et son application aux equation
aux d\'eriv\'ees  partielles du type elliptique}, Soobsc. Har'kov. Math. Obsc. 15 (1915)
38-45.
\bibitem {eck} K. Ecker,   G. Huisken, {\sl Mean curvature evolution of entire graphs}. Ann. of Math. (2) 130 (1989), no. 3, 453-471.
\bibitem {esp}  J. M.  Espinar,  H. Rosenberg, {\sl Complete constant mean curvature surfaces in homogeneous spaces}, Comment. Math. Helv. 86 (2011), no. 3, 659-674.
\bibitem {gio1} E. De Giorgi, {\sl Una estensione del teorema di Bernstein}, Ann. Suola Normale
Sup. di Pisa 19 (1965), 79-80.
\bibitem {hei}  E. Heinz, {\sl $\ddot{U}$ber die L$\ddot{o}$sungen del Minimalf$\ddot{a}$chengleichung}, Nach. Akad. Wiss. G$\ddot{\text{o}}$ttingen Math. Phys. K1 II (1952), 51-56.
\bibitem {hi}   D. T. Hieu, {\sl Some calibrated surfaces in manifolds with density}, J. Geom. Phys. 61 (2011), no. 8, 1625-1629.
\bibitem {mo1} F. Morgan, {\sl Manifolds with density}, Notices Amer. Math.
 Soc., 52 (2005), 853-858.
\bibitem {mo2} F. Morgan, {\sl Geometric Measure Theory: a Beginner's Guide},
 Academic Press, fourth edition, 2008.

 \bibitem {mo5} F. Morgan, {\sl Manifolds with density and
 Perelman's proof of the Poincar\'{e} Conjecture},
  Amer. Math. Monthly 116 (Feb., 2009), 134-142.

 \bibitem {oss1} R. Osserman, {\sl   A survey of minimal surfaces}, Courier Dover Publications, 2002.
\bibitem {sim}  J. Simons, {\sl Minimal Varieties in Riemannian manifolds}, Ann. of Math. 88 (1968)
62-105.

\bibitem {wan}    L.  Wang, {\sl A Bernstein type theorem for self-similar shrinkers}, Geom. Dedicata 151 (2011), 297-303.
\end{thebibliography}
\end{document}